\documentclass[12pt]{scrartcl}
\usepackage[english]{babel}
\usepackage[utf8x]{inputenc}
\usepackage[T1]{fontenc}
\usepackage{lmodern}
\usepackage{amsfonts}
\usepackage{amsmath}
\usepackage{amssymb}
\usepackage{amsthm}
\usepackage{mathrsfs}
\usepackage[colorlinks=false,urlcolor=blue]{hyperref}
\usepackage{listings}
\usepackage{graphicx}
\usepackage[shortlabels]{enumitem}
\usepackage{mathtools}
\usepackage{multirow}
\usepackage{color}
\usepackage{comment}

\addtokomafont{section}{\rmfamily}
\addtokomafont{subsection}{\rmfamily}
\addtokomafont{subsubsection}{\rmfamily}
\addtokomafont{paragraph}{\rmfamily}
\usepackage{indentfirst}

\swapnumbers
\theoremstyle{plain}
\newtheorem{thm}[paragraph]{Theorem}
\newtheorem{lem}[paragraph]{Lemma}

\theoremstyle{definition}
\newtheorem{defn}[paragraph]{Definition}
\newtheorem{rem}[paragraph]{Remark}

\counterwithin{paragraph}{section}
\counterwithout{paragraph}{subsection}
\counterwithout{paragraph}{subsubsection}
\counterwithin{equation}{section}

\newcommand{\R}{\mathbb{R}}%real numbers
\newcommand{\N}{\mathbb{N}}%natural numbers
\newcommand{\Z}{\mathbb{Z}}%integers
\makeatletter
\@ifundefined{sl}{%
}{
}
\makeatother
\newcommand{\rmA}{\mathrm{A}}
\newcommand{\rmB}{\mathrm{B}}
\newcommand{\rmC}{\mathrm{C}}
\newcommand{\rmD}{\mathrm{D}}
\newcommand{\rmE}{\mathrm{E}}
\newcommand{\rmF}{\mathrm{F}}
\newcommand{\rmG}{\mathrm{G}}
\title{\rmfamily Balanced subsets in root systems}
\author{Andrei Moroianu\footnote{Université Paris-Saclay, CNRS,  Laboratoire de mathématiques d'Orsay, 91405 Orsay, France.}\ \footnote{Institute of Mathematics ``Simion Stoilow'' of the Romanian Academy, 21 Calea Grivitei, 010702 Bucharest, Romania.}, Paul Schwahn\footnote{Universidade Estadual de Campinas, IMECC, Rua Sérgio Buarque de Holanda 651, 13083-859 Campinas-SP, Brazil.}}
\date{}

\begin{document}

\maketitle
%{\let\thefootnote\relax\footnotetext{$^\ast$...}}

\begin{abstract}
\noindent
Balanced and well-balanced subsets of the set of positive roots of compact Lie algebras arise naturally in problems related to Hermitian and spin geometry. In this paper we compute the maximal and minimal size of well-balanced subsets in all simple root systems.

\medskip

\noindent{\textit{Mathematics Subject Classification} (2020): 17B22, 17B25}

\medskip

\noindent{\textit{Keywords}: root systems, balanced sets, strongly orthogonal roots.}
\end{abstract}

\section{Introduction}
\label{sec:intro}

%Write introductory paragraph?

Throughout this article, let $R$ be a (reduced) root system and $R^+\subset R$ a choice of positive roots.

\begin{defn}
A subset $S\subset R^+$ is called:
\begin{enumerate}[(i)]
 \item \emph{balanced} if it has a vanishing signed combination, that is, there exist signs $s_\alpha=\pm1$ such that $\sum_{\alpha\in S}s_\alpha\alpha=0$, or equivalently if $S$ can be partitioned into two subsets of vectors having the same sum,
 \item \emph{strongly orthogonal} if any two elements $\alpha,\beta\in S$ are strongly orthogonal, that is, neither $\alpha+\beta$ nor $\alpha-\beta$ is a root,
 \item \emph{well-balanced} if $S$ is balanced and $R^+\setminus S$ is strongly orthogonal.
\end{enumerate}
\end{defn}

Specifying a balanced subset of $R^+$ is equivalent to picking a subset $S$ of the set of \emph{all} roots with the property that $\sum_{\alpha\in S}\alpha=0$ and $S\cap(-S)=\emptyset$.

Strongly orthogonal sets of roots show up in many representation-theoretic, combinatorial, or geometric problems, e.g.~\cite{Har,KW,Take,Kane,Joy,AK1,AK2,BP,BGC}. Thus they have been widely investigated, in particular by Agaoka--Kaneda \cite{AK} who classified the maximal such subsets in simple root systems. The terminology \emph{strongly orthogonal} is motivated by the fact that strongly orthogonal roots are always orthogonal \cite[\S VI.1.3]{Bour}.

Balanced and well-balanced subsets also turn out to be useful in differential geometry. Artacho--Semmelmann \cite{AS} classified the simple root systems with the property that the entire set of positive roots is balanced (they call it \emph{strongly linearly dependent}) and used this to determine which of the full flag manifolds carry invariant spinors.

In \cite{torsion} we use well-balanced sets of roots to construct left-invariant almost Hermitian structures of Gray-Hervella type $\mathcal{W}_1\oplus\mathcal{W}_3$ (semi-Kähler with skew torsion) on compact Lie groups. Using this construction we classify all complete almost Hermitian manifolds carrying a Hermitian connection with parallel, skew-symmetric and closed torsion. Of particular interest for our construction are those well-balanced subsets $S$ with cocardinality $|R^+\setminus S|\neq1$. This motivates us to study the possible cardinality of such sets.

In this article we determine the minimal and maximal possible cardinality of well-balanced subsets of positive root systems. In a reducible root system, well-balanced subsets are always the union of well-balanced subsets of the factors, so we restrict our attention to simple root systems.

\paragraph*{Acknowledgments.} 
A.M.~was partially supported by the PNRR-III-C9-2023-I8 grant \textbf{CF 149/31.07.2023} (Conformal Aspects of Geometry and Dynamics).

P.S.~was supported by FAPESP project \textbf{2024/08127-4}, part of the BRIDGES collaboration, and received partial support by FAPESP grant \textbf{2024/19272-5}.

\section{Preliminaries}

For any $l\in\N$, let $(e_1,\ldots,e_l)$ denote the standard basis of $\R^l$, which is orthonormal for the standard inner product $\langle\,\cdot\,,\cdot\,\rangle$. Our realizations of the simple root systems follow the convention of Bourbaki \cite{Bour}; we lay them out in Table~\ref{roots} below.

\begin{table}[htp]
\centering
\renewcommand{\arraystretch}{1.2}
\begin{tabular}{|c|c|c|}\hline
Dynkin type&\multicolumn{2}{|c|}{Positive roots $R^+$}\\\hline\hline
$\rmA_n$&$e_i-e_j$&$1\leq i<j\leq n+1$\\\hline
\multirow{3}*{$\rmB_n$}&$e_i$&$1\leq i\leq n$\\
&$e_i-e_j$&\multirow{2}*{$1\leq i<j\leq n$}\\
&$e_i+e_j$&\\\hline
\multirow{3}*{$\rmC_n$}&$e_i-e_j$&\multirow{2}*{$1\leq i<j\leq n$}\\
&$e_i+e_j$&\\
&$2e_i$&$1\leq i\leq n$\\\hline
\multirow{2}*{$\rmD_n$}&$e_i-e_j$&\multirow{2}*{$1\leq i<j\leq n$}\\
&$e_i+e_j$&\\\hline
\multirow{3}*{$\rmE_6$}&$-e_i+e_j$&\multirow{2}*{$1\leq i<j\leq 5$}\\
&$e_i+e_j$&\\
&$\tfrac12(e_8-e_7-e_6+\sum_{i=1}^5(-1)^{\nu(i)}e_i)$&$\sum_{i=1}^5\nu(i)$ even\\\hline
\multirow{3}*{$\rmE_7$}&$-e_i+e_j$&$1\leq i<j\leq 6$ or $(i,j)=(7,8)$\\
&$e_i+e_j$&$1\leq i<j\leq 6$\\
&$\tfrac12(e_8-e_7+\sum_{i=1}^6(-1)^{\nu(i)}e_i)$&$\sum_{i=1}^6\nu(i)$ odd\\\hline
\multirow{3}*{$\rmE_8$}&$-e_i+e_j$&\multirow{2}*{$1\leq i<j\leq 8$}\\
&$e_i+e_j$&\\
&$\tfrac12(e_8+\sum_{i=1}^7(-1)^{\nu(i)}e_i)$&$\sum_{i=1}^7\nu(i)$ even\\\hline
\multirow{4}*{$\rmF_4$}&$e_i$&$1\leq i\leq 4$\\
&$e_i-e_j$&\multirow{2}*{$1\leq i<j\leq 4$}\\
&$e_i+e_j$&\\
&$\tfrac12(e_1\pm e_2\pm e_3\pm e_4)$&\\\hline
$\rmG_2$&\multicolumn{2}{|c|}{$\alpha_1,\ \alpha_2,\ \alpha_1+\alpha_2,\ 2\alpha_1+\alpha_2,\ 3\alpha_1+\alpha_2,\ 3\alpha_1+2\alpha_2$}\\\hline
\end{tabular}
\caption{Simple root systems in the convention of \cite{Bour}. For $\rmG_2$ it is most convenient to express the roots in a basis of simple roots $\alpha_1,\alpha_2$.}
\label{roots}
\end{table}

Next, we record some auxiliary formulas that will be repeatedly used in subsequent constructions. The reader may easily verify them:
\begin{align}
\sum_{1\leq i<j\leq 2m}(-1)^{i+j}(e_i+e_j)&=-\sum_{i=1}^{2m}e_i,\label{eq:S+even}\\
\sum_{1\leq i<j\leq 2m+1}(-1)^{i+j}(e_i+e_j)&=-2\sum_{i=1}^me_{2i},\label{eq:S+odd}\\
\sum_{1\leq i<j\leq 2m}(-1)^{i+j}(e_i-e_j)&=\sum_{i=1}^{2m}(-1)^ie_i,\label{eq:S-even}\\
\sum_{1\leq i<j\leq 2m+1}(-1)^{i+j}(e_i-e_j)&=0.\label{eq:S-odd}
\end{align}

{We will also need the following technical definition.
\begin{defn}
For any vector $v\in\R^l$, its \emph{support} is the set
\[I_v:=\{i\in\{1,\ldots,l\}\,|\,\langle e_i,v\rangle\neq0\}.\]
\end{defn}}

\section{Balanced subsets of maximal cardinality}

The name ``well-balanced'' is motivated by the following observation.

\begin{lem}
\label{lem:maxbalanced}
A balanced subset of $R^+$ of maximal cardinality is always well-balanced.
\end{lem}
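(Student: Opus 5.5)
We argue by contradiction. Let $S\subset R^+$ be balanced of maximal cardinality, with signs $s_\alpha$ such that $\sum_{\alpha\in S}s_\alpha\alpha=0$, and suppose $R^+\setminus S$ is not strongly orthogonal. Then there are two distinct roots $\beta,\gamma\in R^+\setminus S$ such that $\beta+\gamma$ or $\beta-\gamma$ is a root. The plan is to produce a balanced set of size $|S|+1$ or $|S|+2$, contradicting maximality.

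Write $\delta$ for the root $\beta+\gamma$ or $\beta-\gamma$. Replacing $\delta$ by $-\delta$ if necessary, we may take $\delta\in R^+$. In every case one of the three roots $\beta,\gamma,\delta$ is a signed combination $\pm$ of the other two, so there are signs with
\[\epsilon_1\beta+\epsilon_2\gamma+\epsilon_3\delta=0.\]
Thus $\{\beta,\gamma,\delta\}$ is itself a balanced triple of positive roots. It consists of three distinct roots, since $\delta\neq\pm\beta,\pm\gamma$ (for instance $\beta+\gamma=\beta$ would force $\gamma=0$).

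We now split into two cases according to whether $\delta$ lies in $S$.

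\emph{Case $\delta\notin S$.} Then $S\cup\{\beta,\gamma,\delta\}$ is a disjoint union of $S$ and the triple. It is balanced, using the signs $s_\alpha$ on $S$ and $\epsilon_i$ on the triple. It has cardinality $|S|+3$, which contradicts maximality.

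\emph{Case $\delta\in S$.} This case requires a little care. Rescale the relation so that $\delta$ appears with coefficient $s_\delta$, that is, choose signs with $s_\delta\delta=\eta_1\beta+\eta_2\gamma$. Substituting this into $\sum_{\alpha\in S}s_\alpha\alpha=0$ gives
\[\sum_{\alpha\in S\setminus\{\delta\}}s_\alpha\alpha+\eta_1\beta+\eta_2\gamma=0.\]
Hence $(S\setminus\{\delta\})\cup\{\beta,\gamma\}$ is balanced. Since $\beta,\gamma\notin S$ and they are distinct, this set has cardinality $|S|+1$, again contradicting maximality.

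The only subtle point is the bookkeeping of signs and distinctness. We need $\beta\neq\gamma$, which holds because they are two distinct elements, and $\delta\notin\{\beta,\gamma\}$. Once this is checked, both cases give a strictly larger balanced subset of $R^+$. Therefore $R^+\setminus S$ is strongly orthogonal and $S$ is well-balanced.
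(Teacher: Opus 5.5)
Your proof is correct and follows the paper's argument: you take the positive root $\delta=\pm(\beta\pm\gamma)$ and split into the same two cases, adjoining all three roots when $\delta\notin S$ and replacing $\delta$ by $\beta,\gamma$ (with signs inherited from $s_\delta$) when $\delta\in S$. Your distinctness check for $\beta,\gamma,\delta$ is a detail the paper leaves implicit; the case $\beta-\gamma=\gamma$ is excluded because $R$ is reduced. Also, your opening sentence announces sizes $|S|+1$ or $|S|+2$, whereas your first case actually gives $|S|+3$.
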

\begin{proof}
Suppose that $S\subset R^+$ is balanced, but not well-balanced. Then there exist $\beta_1,\beta_2\in R^+\setminus S$ such that either $\beta_1+\beta_2$ or $\beta_1-\beta_2$, is a positive root. Let $\gamma\in R^+$ be this root, and let $s: S\to\{\pm1\}$ be a choice of signs such that $\sum_{\alpha\in S}s_\alpha\alpha=0$.

If $\gamma\notin S$, we can construct a balanced subset $S':=S\cup\{\beta_1,\beta_2,\gamma\}$ by extending $s$ with suitable signs,  namely $s_{\beta_1}=s_{\beta_2}=1$ and $s_{\gamma}=-1$ if $\gamma=\beta_1+\beta_2$, and $s_{\beta_1}=1$ and $s_{\beta_2}=s_{\gamma}=-1$ if $\gamma=\beta_1-\beta_2$.

If however $\gamma\in S$, we can again construct a balanced subset $S':=(S\setminus\{\gamma\})\cup\{\beta_1,\beta_2\}$ by choosing $s_{\beta_1}=s_{\beta_2}=s_\gamma$ if $\gamma=\beta_1+\beta_2$, and $s_{\beta_1}=-s_{\beta_2}=s_\gamma$ if $\gamma=\beta_1-\beta_2$.

In any case, we have found a balanced subset $S'\subset R^+$ with $|S'|>|S|$, so $S$ is not of maximal cardinality.
\end{proof}

We now proceed to the main theorem of this section. Note that the cases where $R^+$ itself is balanced were discussed in \cite{AS}; we include them for the sake of completeness.

\begin{thm}
\label{thm:maximalsize}
The minimal cocardinality of a balanced (thus well-balanced) subset of a simple root system is given in the following table.
\begin{center}
\begin{tabular}{|c||c|c|c|c|c|c|c|c|c|}
\hline
$n$&$\rmA_n$&$\rmB_n$&$\rmC_n$&$\rmD_n$&$\rmE_6$&$\rmE_7$&$\rmE_8$&$\rmF_4$&$\rmG_2$\\
\hline
$4k$&$0$&$2k$&$0$&$0$&
\multirow{4}{*}{$0$}&
\multirow{4}{*}{$3$}&
\multirow{4}{*}{$0$}&
\multirow{4}{*}{$0$}&
\multirow{4}{*}{$0$}\\
$4k+1$&$2k+1$&$2k+1$&$1$&$0$&
\multicolumn{1}{|c|}{}&
\multicolumn{1}{c|}{}&
\multicolumn{1}{c|}{}&
\multicolumn{1}{c|}{}&
\multicolumn{1}{c|}{}\\
$4k+2$&$0$&$2k+1$&$1$&$2$&
\multicolumn{1}{|c|}{}&
\multicolumn{1}{c|}{}&
\multicolumn{1}{c|}{}&
\multicolumn{1}{c|}{}&
\multicolumn{1}{c|}{}\\
$4k+3$&$2k+2$&$2k+2$&$0$&$2$&
\multicolumn{1}{|c|}{}&
\multicolumn{1}{c|}{}&
\multicolumn{1}{c|}{}&
\multicolumn{1}{c|}{}&
\multicolumn{1}{c|}{}\\
\hline
\end{tabular}
\end{center}
\end{thm}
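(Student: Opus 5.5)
Each entry of the table has two halves: a lower bound (every balanced subset misses at least that many positive roots) and a construction (an explicit balanced subset missing exactly that many).

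\textbf{Lower bounds.} The lower bounds come from a parity argument. If $S$ is balanced with signs $s_\alpha$, then
\[\sum_{\alpha\in S}\alpha=2\sum_{s_\alpha=-1}\alpha,\]
so $\sum_{\alpha\in S}\alpha$ lies in $2L$, where $L$ is the lattice spanned by $R$ (equivalently by $R^+$). Writing $\rho=\sum_{\alpha\in R^+}\alpha$ (twice the usual half-sum), we obtain
\[\sum_{\beta\in R^+\setminus S}\beta\equiv \rho \pmod{2L}.\]
The plan is to compute the class of $\rho$ in $L/2L$ for each type, and then find the least number of positive roots whose sum lies in that class.
\begin{itemize}
\item For $\rmA_n$ one has $\rho=\sum_i(n+2-2i)e_i$. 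It lies in $2L$ exactly when $n\equiv0,2\pmod 4$, up to a careful check of the coordinate-sum condition.
\item In the other cases $\rho/2$ has half-integer coordinates. Then $\rho$ is congruent mod $2L$ to a sum of $m$ mutually orthogonal roots $e_{i}-e_{i+1}$, and I expect one can show that no fewer than $m=2k+1$ or $2k+2$ roots suffice. The reason is that a sum of $r$ roots has at most $2r$ odd coordinates.
\item For $\rmB_n$ I would argue via coordinate parities. Mod $2L$, with $L=\Z^n$, we have $\rho=\sum(2n-2i+1)e_i\equiv\sum_i e_i$. Each positive root has at most two odd coordinates, so at least $\lceil n/2\rceil$ roots are needed. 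This gives $2k,2k+1,2k+1,2k+2$, matching the table.
\item For $\rmC_n$ and $\rmD_n$ the analogous mod-$2L$ reduction should give the small values $0,1,2$, using the lattices $\{x\in\Z^n:\sum x_i \text{ even}\}$ (for both $\rmC_n$ and $\rmD_n$) together with the appropriate finer invariants.
\item For $\rmE_7$ I would compute $\rho$ in $L/2L\cong\mathbb F_2^7$. Showing that the class of $\rho$ is not a sum of at most $2$ positive roots should be a finite check, using the Weyl group action on $L/2L$ and the quadratic form $q(x)=\langle x,x\rangle/2 \bmod 2$.
\end{itemize}

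\textbf{Constructions.} The upper bounds are explicit, and the auxiliary formulas \eqref{eq:S+even}--\eqref{eq:S-odd} are set up for exactly this purpose. In classical types I would take signs $(-1)^{i+j}$ on $e_i\pm e_j$. By \eqref{eq:S-odd} the $e_i-e_j$ part already vanishes for odd index sets. The remaining sums such as $-\sum e_i$ or $-2\sum e_{2i}$ are then cancelled with the short or long roots $e_i$ and $2e_i$, or by removing a few roots. The number of removed roots should match the lower bound.

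Cases with cocardinality $0$ are covered by \cite{AS}, which can be quoted directly. For $\rmE_7$ one has to exhibit explicit signs on $60$ roots. I would build these from a $\rmD_6$ or $\rmA_7$ subsystem together with signs on the half-spin roots, possibly found by a computer search and then displayed.

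\textbf{Main obstacle.} I expect the hardest part to be getting sharp lower bounds for $\rmC_n$, $\rmD_n$ and $\rmE_7$. The mod-$2L$ invariant may be too coarse there, and a finer invariant might be needed, for example the sum mod $4$ of the coordinates or of $\langle\cdot,\cdot\rangle$. I would first try the mod-$2L$ invariant combined with orthogonality: a sum of two roots has norm at most $8$, whereas the coset of $\rho$ may contain no vector of small norm. This minimal-norm computation in the coset $\rho+2L$ is what I would attempt first.
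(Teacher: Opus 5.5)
Your outline is sound. For the lower bounds it takes a more uniform route than the paper. For $\rmA_n$ and $\rmB_n$, your count of odd coordinates is exactly the paper's argument. Elsewhere the paper argues type by type, pairing the signed sum with specific vectors:
\begin{itemize}
\item the coordinates $e_i$;
\item $\tfrac12\sum_i e_i$, which is what the count of terms $\pm2e_k$ for $\rmC_n,\rmD_n$ amounts to, and which is also the first step for $\rmE_7$;
\item the roots $\beta$ themselves, for $\rmE_7$.
\end{itemize}
All of these pair integrally with $L$, so each such test is implied by your congruence $\sum_{\beta\in R^+\setminus S}\beta\equiv\rho\pmod{2L}$. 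Your invariant is therefore already fine enough everywhere, and no finer invariant is needed.

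For $\rmC_n$, $\rho/2=(n,\dots,1)$ lies in $L$ iff $n(n+1)/2$ is even. For $\rmD_n$, $\rho/2=(n-1,\dots,0)$ lies in $L$ iff $n(n-1)/2$ is even; when it does not, a single root is excluded because its coordinates are odd while $\rho\in2\Z^n$.

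For $\rmE_7$, your norm comparison fails as phrased. Since $\rho/2\in L^*\setminus L$, the coset $\rho+2L=2(\rho/2+L)$ has minimal norm $4\cdot\tfrac32=6$, and a sum of two positive roots can have norm $6$. You must use that $\alpha_1+\alpha_2\equiv\alpha_1-\alpha_2\pmod{2L}$ and that $\min|\alpha_1\pm\alpha_2|^2=4-2|\langle\alpha_1,\alpha_2\rangle|\le4<6$.

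Your quadratic form gives the bound more cheaply:
\begin{itemize}
\item Since $\rho\in2L^*$, its class lies in the radical of $\langle\cdot,\cdot\rangle\bmod2$ on $L/2L$.
\item $q(\rho)=798/2\equiv1$, so this class is nonzero.
\item No single root lies in the radical.
\item $\alpha_1+\alpha_2$ in the radical forces $\langle\alpha_1,\alpha_2\rangle$ to be even, i.e.\ $\alpha_1\perp\alpha_2$, and then $q(\alpha_1+\alpha_2)=0\neq1$.
\end{itemize}
This three-line argument replaces the paper's case analysis (whether $-e_7+e_8$ is one of the roots, both roots of type $\pm e_i+e_j$, or both half-spin), which verifies the same fact by hand in coordinates. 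The paper's route stays elementary; yours is uniform and much shorter for $\rmE_7$.

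For the constructions you follow the paper's route but have not carried it out. Note that \cite{AS} only covers the types where $R^+$ itself is balanced, so the positive-cocardinality cases still need explicit signed sums. The paper obtains them from \eqref{eq:S+even}--\eqref{eq:S-odd} by discarding:
\begin{itemize}
\item $e_{2k-1}-e_{2k}$ for $\rmA_{2m+1}$ and $\rmB_n$, together with $e_{2m+1}$ for $\rmB_{2m+1}$;
\item $2e_n$ for $\rmC_n$;
\item $e_{n-1}\pm e_n$ for $\rmD_n$.
\end{itemize}
For $\rmE_7$ no computer search is needed. The $32$ half-spin positive roots pair off as $\alpha_\nu+\alpha_{1-\nu}=-e_7+e_8$, so they cancel with sign $+$ on $16$ pairs and $-$ on the other $16$. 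Adding the balanced $\rmD_6$ subset of cocardinality $2$ and discarding $-e_7+e_8$ gives cocardinality $3$.
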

\begin{proof}
We deal with each type of simple root system separately.
\paragraph*{Case $\rmA_n$.}
If $n$ is even, it follows from \eqref{eq:S-odd} that $R^+$ itself is balanced. Let us now assume that $n=2m+1$ is odd, and let $S\subset R^+$ be a balanced subset. For each coordinate $i=1,\ldots,2m+2$ and each root $\alpha\in R^+$, the inner product $\langle e_i,\alpha\rangle$ can be either $0$ or $\pm1$. Therefore, in order for $S$ to be balanced, there must be an even number of $\alpha\in S$ with $\langle e_i,\alpha\rangle$ odd. Since there are exactly $2m+1$ many elements $\alpha\in R^+$ with $\langle e_i,\alpha\rangle$ odd, it follows that for each $i$, at least one such element must lie outside of $S$. Now every $\alpha\in R^+$ has support of cardinality $|I_\alpha|=2$, so there must be at least $m+1$ many positive roots outside of $S$.

Conversely, \eqref{eq:S-even} states that
\[\sum_{1\leq i<j\leq 2m+2}(-1)^{i+j}(e_i-e_j)=-(e_1-e_2)-(e_3-e_4)-\ldots-(e_{2m+1}-e_{2m+2}).\]
The roots $e_1-e_2,e_3-e_4,\ldots,e_{2m+1}-e_{2m+2}$ appear with the same sign on both sides of the equation, so their complement is a balanced subset of cocardinality $m+1$,
\[\sum_{\substack{1\leq i<j\leq 2m+2\\(i,j)\neq(2k-1,2k)}}(-1)^{i+j}(e_i-e_j)=0.\]

\paragraph*{Case $\rmB_n$.}
Let $i=1,\ldots,n$. Again, $\langle e_i,\alpha\rangle$ can be either $0$ or $\pm1$ for any $\alpha\in R^+$. For each $i$, there are $2n-1$ elements $\alpha\in R^+$ with $\langle e_i,\alpha\rangle$ odd. So again, if $S\subset R^+$ is balanced, at least one such element must lie outside of $S$ for each $i$. Every $\alpha\in R^+$ satisfies $|I_\alpha|\leq2$, so $|R^+\setminus S|\geq\lceil n/2\rceil$.

To construct a balanced set of this cocardinality, we distinguish two cases. For $n=2m$ even, it follows from \eqref{eq:S+even} and \eqref{eq:S-even} that
\[\sum_{1\leq i<j\leq 2m}(-1)^{i+j}((e_i+e_j)+(e_i-e_j))+\sum_{i=1}^{2m}e_i=-(e_1-e_2)-(e_3-e_4)-\ldots-(e_{2m-1}-e_{2m}),\]
and the roots $e_1-e_2,e_3-e_4,\ldots,e_{2m-1}-e_{2m}$ appear with the same sign on both sides. Thus their complement is a balanced subset of cocardinality $m=\lceil n/2\rceil$,
\[\sum_{1\leq i<j\leq 2m}(-1)^{i+j}(e_i+e_j)+\sum_{\substack{1\leq i<j\leq 2m\\(i,j)\neq(2k-1,2k)}}(-1)^{i+j}(e_i-e_j)+\sum_{i=1}^{2m}e_i=0.\]
If $n=2m+1$ is odd, we note that
\[
\begin{split}
    \sum_{1\leq i<j\leq 2m+1}(-1)^{i+j}((e_i+e_j)&-(e_i-e_j))+\sum_{i=1}^{2m+1}e_i\\
    &=(e_1-e_2)+(e_3-e_4)+\ldots+(e_{2m-1}-e_{2m})+e_{2m+1}
    \end{split}\]
by \eqref{eq:S+odd} and \eqref{eq:S-odd}, and the roots $e_1-e_2,e_3-e_4,\ldots,e_{2m-1}-e_{2m},$ and $e_{2m+1}$ appear with the same sign on both sides. Thus their complement is a balanced subset of cocardinality $m+1=\lceil n/2\rceil$:
\[\sum_{1\leq i<j\leq 2m+1}(-1)^{i+j}(e_i+e_j)+\sum_{\substack{1\leq i<j\leq 2m+1\\(i,j)\neq(2k-1,2k)}}(-1)^{i+j+1}(e_i-e_j)+\sum_{i=1}^{2m}e_i=0.\]

\paragraph*{Case $\rmC_n$.}
Given any signed combination of $R^+$, no matter what the signs of $e_i+e_j$ and $e_i-e_j$ are, the sum of these two terms in the signed combination is of the form $\pm 2e_k$ for $k=i$ or $j$. By grouping the pairs $e_i+e_j$ and $e_i-e_j$, we obtain $n(n-1)/2$ terms of the form $\pm 2e_k$ in any signed combination of these roots. Together with the remaining roots $2e_1,\ldots 2e_n\in R^+$, any signed combination of $R^+$ can be written as a sum of $n(n+1)/2$ terms of the form $\pm 2e_k$, which cannot all cancel out if $n(n+1)/2$ is odd, i.e.~if $n\equiv 1,2\mod4$. In this case, any balanced subset must have cocardinality at least $1$.

We turn to constructing the optimal balanced subsets. For $n=2m$ even, one may verify with \eqref{eq:S+odd} and \eqref{eq:S-odd} that
\begin{align*}
&\sum_{1\leq i<j\leq 2m-1}(-1)^{i+j}((e_i+e_j)+(e_i-e_j))+\sum_{i=1}^{2m-1}(-1)^i2e_i\\
&\quad+\sum_{i=1}^m((e_{2i-1}-e_{2m})+(e_{2i-1}+e_{2m}))+\sum_{i=1}^{m-1}(-1)^i((e_{2i}-e_{2m})-(e_{2i}+e_{2m}))\\
&=(1+(-1)^m)e_{2m}=\begin{cases}
    2e_{2m},&m\text{ even,}\\
    0,&m\text{ odd}
\end{cases}
\end{align*}
is a signed combination of $R^+\setminus\{2e_{2m}\}$. This gives a balanced subset of cocardinality $1$ if $n\equiv2\mod4$, while adding in $-2e_{2m}$ shows that $R^+$ is balanced if $n\equiv0\mod4$.

If $n=2m+1$ is odd, \eqref{eq:S+even} and \eqref{eq:S-even} imply that similarly,
\begin{align*}
&\sum_{1\leq i<j\leq 2m}(-1)^{i+j}((e_i+e_j)+(e_i-e_j))+\sum_{i=1}^{2m}(-1)^{i+1}2e_i\\
&\quad+\sum_{i=1}^m((e_{2i}-e_{2m+1})+(e_{2i}+e_{2m+1}))+\sum_{i=1}^{m}(-1)^i((e_{2i-1}-e_{2m+1})-(e_{2i-1}+e_{2m+1}))\\
&=(1+(-1)^{m+1})e_{2m+1}=\begin{cases}
    0,&m\text{ even,}\\
    2e_{2m+1},&m\text{ odd}
\end{cases}
\end{align*}
is a signed combination of $R^+\setminus\{2e_{2m+1}\}$, showing that there exists a balanced subset of cocardinality $1$ if $n\equiv1\mod4$, and (after adding in $-2e_{2m+1}$) that $R^+$ is itself balanced if $n\equiv3\mod4$.

\paragraph*{Case $\rmD_n$.}
As in the previous case, given any signed combination of $R^+$, we may group the pairs $e_i+e_j$ and $e_i-e_j$ to obtain a sum of $n(n-1)/2$ terms of the form $\pm 2e_k$, which cannot all cancel out if $n(n-1)/2$ is odd, i.e.~if $n\equiv 2,3\mod4$. However, for each $i=1,\ldots,n$, the inner product $\langle e_i,\alpha\rangle$ can again only be $0$ or $\pm1$, and there are $2(n-1)$ many $\alpha\in R^+$ with $\langle e_i,\alpha\rangle$ odd. Thus, if $S\subset R^+$ is balanced, an even number of such elements must lie outside $S$. In particular it follows that $|R^+\setminus S|\geq2$ if $n\equiv 2,3\mod4$.

The optimal balanced subsets are swiftly constructed. With \eqref{eq:S+even}--\eqref{eq:S-odd}, one can check that
\begin{align}
&\sum_{1\leq i<j\leq2m-1}(-1)^{i+j}((e_i-e_j)+(e_i+e_j))+\sum_{i=1}^{m-1}((e_{2i}-e_{2m})+(e_{2i}+e_{2m}))\notag\\
&\quad+\sum_{i=1}^{m}(-1)^i((e_{2i-1}-e_{2m})-(e_{2i-1}+e_{2m}))=(1-(-1)^m)e_{2m},\label{eq:Deven}\\
&\sum_{1\leq i<j\leq2m}(-1)^{i+j}((e_i-e_j)+(e_i+e_j))+\sum_{i=1}^{m}((e_{2i-1}-e_{2m+1})+(e_{2i-1}+e_{2m+1}))\notag\\
&\quad+\sum_{i=1}^{m}(-1)^i((e_{2i}-e_{2m+1})-(e_{2i}+e_{2m+1}))=(1-(-1)^m)e_{2m+1}.\label{eq:Dodd}
\end{align}
This shows that $R^+$ is balanced if $n\equiv0,1\mod4$. For $n\equiv2\mod4$, we may remove the terms $e_{2m-1}+e_{2m}$ and $-(e_{2m-1}-e_{2m})$ from the left hand side of \eqref{eq:Deven} to obtain the balanced subset $R^+\setminus\{e_{2m-1}\pm e_{2m}\}$ of cocardinality $2$. Similarly, if $n\equiv3\mod4$, we see that $R^+\setminus\{e_{2m}\pm e_{2m+1}\}$ is a balanced subset of cocardinality $2$ by removing $e_{2m}+e_{2m+1}$ and $-(e_{2m}-e_{2m+1})$ from \eqref{eq:Dodd}.

\paragraph*{Case $\rmE_6$.}
In order to show that $R^+$ is balanced, we split it into three disjoint subsets, each of which is balanced itself. First, the roots of the form $\pm e_i+e_j$ are (up to sign) also the positive roots of $\rmD_5$, which we have just shown to be a balanced set. Second, for any $\nu:\{1,\ldots,4\}\to\{0,1\}$, denote
\[\alpha_{\pm,\nu}:=\tfrac12(e_8-e_7-e_6\pm e_5+\sum_{i=1}^4(-1)^{\nu(i)}e_i)\]
and consider the $16$-element subsets
\begin{align*}
S_+&:=\left\{\alpha_{+,\nu}\,\middle|\,\nu(i)\in\{0,1\},\ \sum_{i=1}^4\nu(i)\text{ even}\right\},\\
S_-&:=\left\{\alpha_{-,\nu}\,\middle|\,\nu(i)\in\{0,1\},\ \sum_{i=1}^4\nu(i)\text{ odd}\right\}
\end{align*}
of $R^+$. The map $\alpha_{\pm,\nu}\mapsto\alpha_{\pm,1-\nu}$ is a fixed-point-free involution on $S_\pm$, splitting $S_\pm$ into $8$ pairs with
\[\alpha_{\pm,\nu}+\alpha_{\pm,1-\nu}=e_8-e_7-e_6\pm e_5.\]
By choosing $+$ signs on $4$ of these pairs, and $-$ on the $4$ remaining ones, we find a signed combination of $S_\pm$ that vanishes.

\paragraph*{Case $\rmE_7$.}
Let $\rho=\tfrac12\sum_{\alpha\in R^+}$ denote the half-sum of positive roots. We record \cite{Bour}
\[2\rho=(0,2,4,6,8,10,-17,17).\]
First, the vector $v:=\tfrac12\sum_{i=1}^8e_i$ has inner product $0$ or $\pm1$ with each $\alpha\in R^+$. Thus, for any choice of signs $s:R^+\to\{\pm1\}$, 
\[\langle v,\sum_{\alpha\in R^+}s_\alpha\alpha\rangle\equiv\langle v,2\rho\rangle\equiv1\mod2,\]
and in particular $R^+$ cannot be balanced.

Since in our case, all roots have squared length $2$, we have $\langle\alpha,\beta\rangle\in\Z$ for all $\alpha,\beta\in R^+$ \cite[\S VI.1.1]{Bour}. In fact, by \cite[\S VI.1.3, Prop.~8]{Bour}, $\langle\alpha,\beta\rangle\in\{-2,-1,0,1,2\}$, with $\pm2$ only occurring if $\alpha=\pm\beta$. Moreover, by \cite[\S VI.1.10, Prop.~29]{Bour}, $\langle\beta,2\rho\rangle=|\beta|^2$ if $\beta$ is a simple root, which implies that $\langle\beta,\rho\rangle\in\Z$ for any $\beta\in R^+$. Thus
\[\langle\beta,\sum_{\alpha\in R^+}s_\alpha\alpha\rangle=2\langle\beta,\rho\rangle\equiv0\mod2.\]
If there existed $\alpha_1\in R^+$ such that $R^+\setminus\{\alpha_1\}$ is balanced, then by the above $\langle\beta,\alpha_1\rangle\equiv0\mod2$ for all $\beta\in R^+$. This means that $\alpha_1\perp\beta$ for all $\beta\in R^+\setminus\{\alpha_1\}$, which is impossible.

Next, assume that $R^+\setminus\{\alpha_1,\alpha_2\}$ is balanced, with $\alpha_1\neq\alpha_2$. Then the above argument shows that
\begin{equation}
\langle\beta,\alpha_1\rangle\equiv\langle\beta,\alpha_2\rangle\mod2\label{eq:E7tworoots}
\end{equation}
for all $\beta\in R^+$. This amounts to saying that any $\beta\in R^+$ is equal or perpendicular to $\alpha_1$ if and only if it is equal or perpendicular to $\alpha_2$. In particular, by setting $\beta=\alpha_1$ in \eqref{eq:E7tworoots}, we find $\langle\alpha_1,\alpha_2\rangle\equiv0\mod2$, hence $\alpha_1\perp\alpha_2$. Now we need to distinguish a few cases.

Suppose that one of the two roots, say $\alpha_1$, is $-e_7+e_8$. Then $\alpha_1\perp\alpha_2$ implies that $\alpha_2=\pm e_i+e_j$ for some indices $1\leq i<j\leq 6$. Now for any $1\leq k\leq 6$ with $k\neq i,j$, the positive root $e_i+e_k$ is perpendicular to $\alpha_1$ but not to $\alpha_2$. So by \eqref{eq:E7tworoots}, this case is impossible.

Taking now $\beta=-e_7+e_8$ in \eqref{eq:E7tworoots}, we see that either $\langle\beta,\alpha_1\rangle=\langle\beta,\alpha_2\rangle=0$ and both $\alpha_1,\alpha_2$ are of the form $\pm e_i+e_j$, $1\leq i<j\leq6$, or $\langle\beta,\alpha_1\rangle=\langle\beta,\alpha_2\rangle=1$ and both $\alpha_1,\alpha_2$ are of the form
\[\alpha_\nu:=\tfrac12(-e_7+e_8+\sum_{i=1}^6(-1)^{\nu(i)}e_i)\]
for some $\nu:\{1,\ldots,6\}\to\{0,1\}$ with $\sum_i\nu(i)$ odd.

In the first case, let $\alpha_1=\pm e_i+e_j$. Then it follows from \eqref{eq:E7tworoots} applied to $\beta$ of the same type that $\alpha_2=\mp e_i+e_j$. But then \eqref{eq:E7tworoots} with $\beta=\alpha_\nu$ fails for every $\nu$, since $\alpha_\nu$ is always orthogonal to exactly one of $\alpha_1,\alpha_2$.

In the second case, write $\alpha_1=\alpha_\mu$ and $\alpha_2=\alpha_\nu$. Then $\nu\neq1-\mu$, since
\[\langle\alpha_\mu,\alpha_{1-\mu}\rangle=-1\neq0.\]
Thus there exist $1\leq i<j\leq 6$ such that $\mu(i)=\nu(i)$ and $\mu(j)\neq\nu(j)$. But then \eqref{eq:E7tworoots} for $\beta=e_i+e_j$ gives a contradiction, since $\beta$ is orthogonal to exactly one of $\alpha_1,\alpha_2$.

In conclusion, $R^+\setminus\{\alpha_1,\alpha_2\}$ cannot be balanced, and any balanced subset of $R^+$ must have cocardinality at least $3$.

We can construct a balanced subset with this cocardinality. Like before, we split $R^+$ into several parts. First, the map $\alpha_\nu\mapsto\alpha_{1-\nu}$ is again a fixed-point-free involution on the set
\[S':=\left\{\alpha_\nu\,\middle|\,\nu(i)\in\{0,1\},\ \sum_{i=1}^6\nu(i)\text{ odd}\right\}.\]
This splits $S'$ into $32$ pairs with $\alpha_\nu+\alpha_{1-\nu}=-e_7+e_8$, and as before we see that $S'$ is balanced.

Second, the roots $\pm e_i+e_j$, $1\leq i<j\leq 6$, are (up to sign) the positive roots of $\rmD_6$, and we have already shown that this root system has a balanced subset $S''$ of cocardinality $2$. Excluding the last remaining positive root $-e_7+e_8$, we finally obtain a balanced subset $S:=S'\cup S''\subset R^+$ with $|R^+\setminus S|=3$.

\paragraph*{Case $\rmE_8$.}
Similar to the case $\rmE_6$, we split $R^+$ into three disjoint balanced subsets. The roots $\pm e_i+e_j$ are (up to sign) the positive roots of $\rmD_8$, which we have seen to be balanced. We then define
\[\alpha_{\pm,\nu}:=\tfrac12(e_8\pm e_7+\sum_{i=1}^6(-1)^{\nu(i)}e_i)\]
for any $\nu:\{1,\ldots,6\}\to\{0,1\}$, and the $64$-element subsets
\begin{align*}
S_+&:=\left\{\alpha_{+,\nu}\,\middle|\,\nu(i)\in\{0,1\},\ \sum_{i=1}^6\nu(i)\text{ even}\right\},\\
S_-&:=\left\{\alpha_{-,\nu}\,\middle|\,\nu(i)\in\{0,1\},\ \sum_{i=1}^6\nu(i)\text{ odd}\right\}.
\end{align*}
The analogous argument as in the case $\rmE_6$ shows that the $S_\pm$ are balanced.

\paragraph*{Case $\rmF_4$.}
An explicit signed combination showing that $R^+$ is balanced is exhibited in the proof of \cite[Thm.~6]{AS} (their convention for the root system coincides with ours in this particular case).

\paragraph*{Case $\rmG_2$.}
$R^+$ is balanced since
\begin{align*}
\alpha_1+(\alpha_1+\alpha_2)-(2\alpha_1+\alpha_2)+\alpha_2+(3\alpha_1+\alpha_2)-(3\alpha_1+2\alpha_2)=0.
\end{align*}
\end{proof}

\begin{rem}
One may also ask about balanced subsets of $R^+$ which are maximal under inclusion. They do not necessarily have the same cardinality.

For example, in a root system of type $\rmC_5$, start with the balanced subset $R^+\setminus\{2e_5\}$. In the proof of Theorem~\ref{thm:maximalsize} we have constructed a vanishing signed combination of $R^+\setminus\{2e_5\}$ containing the two terms $-(e_1-e_5)+(e_1+e_5)$. Replacing them by $2e_5$, we obtain a vanishing signed combination of $R^+\setminus\{e_1+e_5,e_1-e_5\}$. This is maximal among balanced sets: indeed, note that $\langle e_5,\alpha\rangle\in\{-2,-1,0,1,2\}$ for any $\alpha\in R^+$, and there are $8$ elements $\alpha\in R^+$ with $\langle e_5,\alpha\rangle=\pm1$. Thus for any choice of signs,
\[\langle e_5,\sum_{\alpha\in R^+\setminus\{e_1+e_5\}}s_\alpha\alpha\rangle\equiv1\mod2,\]
and the same holds for $R^+\setminus\{e_1-e_5\}$.

A classification of the maximal balanced subsets of the sets of positive roots of simple root systems seems out of reach for now.
\end{rem}

\section{Well-balanced subsets of minimal cardinality}

A balanced subset of $R^+$ can be very small: of course, the empty set is balanced, and (except for rank $1$) one can also always find balanced subsets of cardinality $3$. It more interesting to ask about the smallest possible cardinality of a \emph{well-balanced} subset. Since strongly orthogonal roots are in particular orthogonal, a crude upper bound on the cocardinality of a well-balanced subset of $R^+$ is the rank of $R$. An improvement over this are the sharp upper bounds on the cardinality of strongly orthogonal subsets in \cite[Thm.~3.1, Thm.~5.1]{AK}. We will use these facts in the proof of the following result.

\begin{thm}
\label{thm:minimalsize}
The maximal cocardinality of a well-balanced subset of a simple root system is given in the following table.
\begin{center}
\begin{tabular}{|c||c|c|c|c|c|c|c|c|c|}
\hline
$n$&$\rmA_n$&$\rmB_n$&$\rmC_n$&$\rmD_n$&$\rmE_6$&$\rmE_7$&$\rmE_8$&$\rmF_4$&$\rmG_2$\\
\hline
$4k$&$0$&$2k$&$4k$&$4k$&
\multirow{4}{*}{$4$}&
\multirow{4}{*}{$7$}&
\multirow{4}{*}{$8$}&
\multirow{4}{*}{$4$}&
\multirow{4}{*}{$2$}\\
$4k+1$&$2k+1$&$2k+1$&$4k+1$&$4k$&
\multicolumn{1}{|c|}{}&
\multicolumn{1}{c|}{}&
\multicolumn{1}{c|}{}&
\multicolumn{1}{c|}{}&
\multicolumn{1}{c|}{}\\
$4k+2$&$0$&$2k+1$&$4k+1$&$4k+2$&
\multicolumn{1}{|c|}{}&
\multicolumn{1}{c|}{}&
\multicolumn{1}{c|}{}&
\multicolumn{1}{c|}{}&
\multicolumn{1}{c|}{}\\
$4k+3$&$2k+2$&$2k+2$&$4k+2$&$4k+2$&
\multicolumn{1}{|c|}{}&
\multicolumn{1}{c|}{}&
\multicolumn{1}{c|}{}&
\multicolumn{1}{c|}{}&
\multicolumn{1}{c|}{}\\
\hline
\end{tabular}
\end{center}
\end{thm}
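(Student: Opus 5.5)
We need the maximal size of a strongly orthogonal set $T\subset R^+$ whose complement $R^+\setminus T$ is balanced. The plan is to get the upper bound from two sources and to produce matching constructions.

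\textbf{Upper bounds.} The first source is the Agaoka--Kaneda bound \cite{AK} on the maximal size of a strongly orthogonal set. That size is $\lfloor (n+1)/2\rfloor$ for $\rmA_n$, $n$ for $\rmB_n$ and $\rmC_n$, $2\lfloor n/2\rfloor$ for $\rmD_n$, and $4,7,8,4,2$ for $\rmE_6,\rmE_7,\rmE_8,\rmF_4,\rmG_2$.

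The second source is the set of parity obstructions already used in the proof of Theorem~\ref{thm:maximalsize}, now applied to the complement.
\begin{itemize}
\item For $\rmA_n$ with $n$ even, $R^+$ is balanced, so a nonempty $T$ must be excluded. Take the coordinate $e_i$: exactly $n$ roots have odd $\langle e_i,\alpha\rangle$, and $n$ is even. Strongly orthogonal roots in $\rmA_n$ have disjoint supports, so $T$ meets each coordinate at most once. Hence the count of odd roots in $R^+\setminus T$ at a coordinate touched by $T$ is odd, a contradiction. So $T=\emptyset$, giving the entry $0$.
\item For $\rmA_n$ with $n$ odd, every coordinate needs one removed root, so $T$ must be a perfect matching of size $(n+1)/2$. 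The construction in Theorem~\ref{thm:maximalsize} supplies exactly such a set, and it is strongly orthogonal.
\item For $\rmB_n$ the same count ($2n-1$ odd roots per coordinate) forces $T$ to meet every coordinate an odd number of times.
\item For $\rmC_n$ the count of $\pm2e_k$ terms after pairing $e_i\pm e_j$ gives $|R^+|\equiv |T| \pmod 2$ in a suitable sense. More precisely, $n(n+1)/2-|T|$ counted with the pairing must be even, which rules out $|T|=n$ when $n\equiv 2,3\pmod 4$, and similarly for $n\equiv 1$.
\item For $\rmD_n$ I would combine the $2(n-1)$ count per coordinate with the pairing parity.
\end{itemize}
A refinement is needed for strongly orthogonal sets containing both $e_i+e_j$ and $e_i-e_j$. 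I would treat these pairs as contributing $\pm 2e_i$ (or zero) and redo the mod-2 count of $\pm2e_k$ terms. This is what should yield entries such as $4k+1$ for $\rmC_{4k+2}$ and $4k$ for $\rmD_{4k+1}$.

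For $\rmE_7$ I would reuse the $\rho$-parity argument: $\langle\beta,\sum\rangle\equiv 0\pmod 2$ forces $\sum_{\alpha\in T}\langle\beta,\alpha\rangle$ to be even for all $\beta$. For $\rmE_6,\rmE_8,\rmF_4,\rmG_2$ the Agaoka--Kaneda bound should already be attained.

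\textbf{Constructions.} For each case I would take an explicit maximal strongly orthogonal set and exhibit a vanishing signed combination of its complement. Where possible this reuses the identities \eqref{eq:S+even}--\eqref{eq:S-odd}.
\begin{itemize}
\item In $\rmC_n$ and $\rmD_n$, take $T$ consisting of the pairs $\{e_{2i-1}\pm e_{2i}\}$, possibly together with a long root $2e_n$. Then modify the signed sums \eqref{eq:Deven}, \eqref{eq:Dodd} and the $\rmC_n$ analogues by swapping the roles of grouped pairs, as in the Remark after Theorem~\ref{thm:maximalsize}.
\item For $\rmB_n$, the complement constructed in Theorem~\ref{thm:maximalsize} is already strongly orthogonal of size $\lceil n/2\rceil$. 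Its size matches the lower bound forced by parity, and the upper bound should again be $\lceil n/2\rceil$. The argument: roots in $T$ have pairwise disjoint supports except for $e_i\pm e_j$ pairs, which touch coordinates evenly, and these are forbidden by the odd-count requirement. So $T$ has disjoint supports covering all coordinates an odd number of times.
\item In the exceptional cases, use the decompositions from the previous proof. In $\rmE_8$, remove the 8 roots $e_{2i-1}\pm e_{2i}$ from the $\rmD_8$ part and rebalance. In $\rmE_7$, remove the six $\rmD_6$ roots $e_{2i-1}\pm e_{2i}$ together with $-e_7+e_8$.
\end{itemize}

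\textbf{Main obstacle.} The hardest part will be the sharp mod-$2$ bookkeeping in $\rmC_n$ and $\rmD_n$, which separates, say, $4k+1$ from $4k+2$. I would first handle it by classifying strongly orthogonal sets in $\rmC_n$ and $\rmD_n$ as unions of pairs $\{e_i\pm e_j\}$, single roots $e_i\pm e_j$, and long roots $2e_i$ (for $\rmC_n$) with disjoint supports. I would then compute the parity constraint from each coordinate functional $e_k$ and from the total $\pm2e_k$ count. The $\rmE_7$ and $\rmF_4$ constructions may also require ad hoc explicit checks.
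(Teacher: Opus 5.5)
The main gap is in $\rmC_n$, where parts of your argument are wrong.

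In $\rmC_n$ the roots $e_i+e_j$ and $e_i-e_j$ are \emph{not} strongly orthogonal, because their sum $2e_i$ is a root. More generally, two distinct roots of $\rmC_n$ are strongly orthogonal if and only if their supports are disjoint. Consequences:
\begin{itemize}
\item Your candidate $T=\{e_{2i-1}\pm e_{2i}\}$ (with or without $2e_n$) is not strongly orthogonal in $\rmC_n$. Your description of strongly orthogonal sets in $\rmC_n$ as unions of such pairs, single short roots and long roots is incorrect.
\item This fact is also what your upper bound lacks. The ``count with the pairing'' is not well defined for a general $T$, since a single $e_i+e_j\in T$ breaks a pair. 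With disjoint supports, a strongly orthogonal $T$ of size $n$ must be exactly $\{2e_1,\dots,2e_n\}$. Its complement is the set of positive roots of $\rmD_n$, which by Theorem~\ref{thm:maximalsize} is balanced iff $n\equiv0,1\pmod4$.
\item Your claim that $|T|=n$ is also excluded for $n\equiv1\pmod4$ is false: the entry for $\rmC_{4k+1}$ is $4k+1=n$, realized by $\{2e_1,\dots,2e_n\}$.
\item For $n\equiv2,3\pmod4$, the size-$(n-1)$ construction must again use long roots only. By the $\rmD_n$ case, $R^+\setminus\{e_{n-1}\pm e_n,2e_1,\dots,2e_n\}$ is balanced. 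The relation $(e_{n-1}+e_n)-(e_{n-1}-e_n)-2e_n=0$ then shows that $R^+\setminus\{2e_1,\dots,2e_{n-1}\}$ is well-balanced.
\end{itemize}
Your parity route can also be made rigorous. The count at each coordinate $e_k$, together with disjointness of supports, forces $T$ to contain no short roots. Pairing with $\sum_k e_k$ modulo $4$ then gives $|T|\equiv n(n+1)/2\pmod 2$, which reproduces the upper bounds of the $\rmC_n$ column.

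Smaller points:
\begin{itemize}
\item In $\rmB_n$, ``disjoint supports covering every coordinate once'' does not yet give $\lceil n/2\rceil$, since it allows $T=\{e_1,\dots,e_n\}$. You must add that distinct short roots are never strongly orthogonal ($e_i-e_j\in R$), so at most one element of $T$ has support of size $1$.
\item In $\rmD_n$ no parity refinement is needed. Every entry of that column equals the Agaoka--Kaneda bound $2\lfloor n/2\rfloor$, so nothing has to ``separate'' $4k$ for $\rmD_{4k+1}$.
\item The whole content of $\rmD_n$ is showing that $R^+\setminus\{e_1\pm e_2,e_3\pm e_4,\dots\}$ is balanced for every $n$. ``Modify the signed sums'' is not yet a proof: deleting these pairs from \eqref{eq:Deven} or \eqref{eq:Dodd} destroys the cancellation. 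You need an explicit new vanishing combination, e.g.\ built from \eqref{eq:S+even}--\eqref{eq:S-odd}. Your $\rmE_6$, $\rmE_7$, $\rmE_8$ constructions rest on it.
\item Explicit vanishing combinations for $\rmF_4$ and $\rmG_2$ are still missing.
\item The $\rho$-parity argument for $\rmE_7$ is unnecessary, since \cite{AK} already gives the bound $7$.
\end{itemize}
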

\begin{proof}
Again, we proceed case by case.
\paragraph*{Case $\rmA_n$.}
Two distinct roots of $\rmA_n$ are strongly orthogonal if and only if their supports are disjoint, and every $\alpha\in R^+$ has $|I_\alpha|=2$. This immediately implies that the maximal cardinality of a strongly orthogonal subset is $\lfloor(n+1)/2\rfloor$, cf.~\cite{AK}. For $n$ odd, this is exactly the cocardinality of the well-balanced subset constructed in Theorem~\ref{thm:maximalsize}.

Now let $n=2m$ be even. For each $i=1,\ldots,2m+1$ and $\alpha\in R^+$, the inner product $\langle e_i,\alpha\rangle$ can take values $0$ or $\pm1$, and there are exactly $2m$ many $\alpha\in R^+$ with $\langle e_i,\alpha\rangle$ odd. Thus, if $S\subset R^+$ is a well-balanced subset, an even number of such roots must lie outside $S$. But for each $i$, a strongly orthogonal subset contains at most one element $\alpha$ with $\langle e_i,\alpha\rangle\neq0$, since any two such roots are not strongly orthogonal. Thus the only possibility is that no such element lies outside $S$ for each $i$, hence $S=R^+$. And indeed, by \eqref{eq:S-odd}, $R^+$ is well-balanced.

\paragraph*{Case $\rmB_n$.}
Again, for each $i=1,\ldots,n$ and $\alpha\in R^+$, the inner product $\langle e_i,\alpha\rangle$ can take values $0$ or $\pm1$, and there are exactly $2n+1$ many $\alpha\in R^+$ such that $\langle e_i,\alpha\rangle$ is odd. Therefore, if $S$ is a balanced subset of $R^+$, then for each $i$ the number of roots $\alpha\in R^+\setminus S$ such that $\langle e_i,\alpha\rangle\neq0$ is odd.

On the other hand, for each $i$, the maximal cardinality of a strongly orthogonal set of positive roots whose support contains $i$ is 2. Consequently, if $S\subset R^+$ is well-balanced, then for every $i$ there is exactly one root $\alpha\in R^+\setminus S$ such that $\langle e_i,\alpha\rangle\neq0$. Since moreover $e_i$ and $e_j$ are not strongly orthogonal for $i\neq j$, it follows that $ R^+\setminus S$ contains at most one root of this form, and all others have a support of cardinality 2. This implies that $|R^+\setminus S|=\lceil n/2\rceil$. This is exactly the cocardinality of the well-balanced subset constructed in Theorem~\ref{thm:maximalsize}.

\paragraph*{Case $\rmC_n$.}
First, the cocardinality of a well-balanced subset $S\subset R^+$ is bounded by the rank, $|R^+\setminus S|\leq n$. If $n\equiv0,1\mod4$, this bound is attained: the positive roots of the subsystem $\rmD_n\subset\rmC_n$ form a balanced set by Theorem~\ref{thm:maximalsize}, and the remaining roots $2e_1,\ldots,2e_n$ are strongly orthogonal.

Now consider $n\equiv2,3\mod4$. One quickly verifies that two distinct roots of $\rmC_n$ are strongly orthogonal if and only if their supports are disjoint. If $S\subset R^+$ is well-balanced, its complement can only have $|R^+\setminus S|=n$ if each root $\alpha\in R^+\setminus S$ has $|I_\alpha|=1$, i.e.~if $R^+\setminus S=\{2e_1,\ldots,2e_n\}$. But this would mean that the positive roots of $\rmD_n$ are a balanced set, contradicting Theorem~\ref{thm:maximalsize}.

However, there exists a well-balanced set of cocardinality $n-1$. The proof of Theorem~\ref{thm:maximalsize}, case $\rmD_n$, shows that $R^+\backslash\{e_{n-1}+e_n,e_{n-1}-e_n,2e_1,\ldots,2e_n\}$ is balanced. Since $(e_{n-1}+e_n)-(e_{n-1}-e_n)=2e_n$, we may add these three roots to the previous balanced set and obtain the well-balanced subset $R^+\backslash\{2e_1,\ldots,2e_{n-1}\}$ of cocardinality $n-1$.

\paragraph*{Case $\rmD_n$.}
\cite{AK} tells us that a strongly orthogonal subset of $R^+$ can have cardinality at most $2\lfloor n/2\rfloor$.
This is the cardinality of the strongly orthogonal subset $P:=\{e_1+e_2, e_1-e_2, e_3+e_4, e_3-e_4, ...\}$. We will show that $R^+\setminus P$ is balanced.

First, if $n=2m$ is even, we have by \eqref{eq:S+odd} and \eqref{eq:S-odd}
\begin{align*}
\sum_{\substack{1\leq i<j\leq 2m-1\\(i,j)\neq(2k-1,2k)}}(-1)^{i+j}((e_i-e_j)+(e_i+e_j))&=\sum_{1\leq i<j\leq 2m-1}(-1)^{i+j}((e_i-e_j)+(e_i+e_j))\\
&\quad+\sum_{i=1}^{m-1}((e_{2i-1}+e_{2i})+(e_{2i-1}-e_{2i}))\\
&=-2\sum_{i=1}^{2m-2}(-1)^ie_i,
\end{align*}
and consequently
\begin{align*}
\sum_{\substack{1\leq i<j\leq 2m-1\\(i,j)\neq(2k-1,2k)}}(-1)^{i+j}((e_i-e_j)+(e_i+e_j))+\sum_{i=1}^{2m-2}(-1)^i((e_i+e_{2m})+(e_i-e_{2m}))=0
\end{align*}
is a vanishing signed combination of $R^+\setminus P$. For $n=2m+1$ odd, \eqref{eq:S+even} and \eqref{eq:S-even} show that
\begin{align*}
\sum_{\substack{1\leq i<j\leq 2m\\(i,j)\neq(2k-1,2k)}}(-1)^{i+j}((e_i-e_j)+(e_i+e_j))&=\sum_{1\leq i<j\leq 2m}(-1)^{i+j}((e_i-e_j)+(e_i+e_j))\\
&\quad+\sum_{i=1}^m((e_{2i-1}+e_{2i})+(e_{2i-1}-e_{2i}))\\
&=0,
\end{align*}
so we find that
\begin{align*}
\sum_{\substack{1\leq i<j\leq 2m-1\\(i,j)\neq(2k-1,2k)}}(-1)^{i+j}((e_i-e_j)+(e_i+e_j))+\sum_{i=1}^{2m}(-1)^i((e_i+e_{2m+1})-(e_i-e_{2m+1}))=0
\end{align*}
is again a vanishing signed combination of $R^+\setminus P$. In total, there is always a well-balanced set of cocardinality $2\lfloor n/2\rfloor$.

\paragraph*{Case $\rmE_6$.}
By \cite{AK}, a strongly orthogonal subset of $R^+$ has cardinality at most $4$. Indeed, by the previous case, there is a well-balanced subset of the positive roots of $\rmD_5$ with cocardinality $4$. The remaining roots of $\rmE_6$ form a balanced set, as shown in the proof of Theorem~\ref{thm:maximalsize}, and the sum or difference of two roots of the form $\pm e_i+e_j$ is never of the form $\alpha_{\pm,\nu}$. Thus, taking the union of the two balanced subsets, we obtain a well-balanced subset of $R^+$ with cocardinality $4$.

\paragraph*{Case $\rmE_7$.}
The positive roots of $\rmE_7$ split into the positive roots (up to sign) of $\rmD_6$, the root $-e_7+e_8$, and the balanced set $S'$ (see proof of Theorem~\ref{thm:maximalsize}). We have seen that $\rmD_6$ has a well-balanced set of positive roots of cocardinality $6$. Taking the union with $S'$, we find a well-balanced subset of $R^+$ with cocardinality $7$, since the sum or difference of two $\pm e_i+e_j$, $1\leq i,j\leq 6$, can neither be $-e_7+e_8$ nor in $S'$.

\paragraph*{Case $\rmE_8$.}
Similarly, $R^+$ splits into the positive roots (up to sign) of $\rmD_8$ and the two balanced sets $S_\pm$. Since $\rmD_8$ has a well-balanced subset of cocardinality $8$, the union with $S_+\cup S_-$ gives a well-balanced subset of $R^+$ of cocardinality $8$.

\paragraph*{Case $\rmF_4$.}
The four roots $e_1\pm e_2,e_3\pm e_4$ are clearly strongly orthogonal. We show that $R^+\setminus\{e_1\pm e_2,e_3\pm e_4\}$ is a balanced subset. First, as in case $\rmD_n$, we note that
\[\sum_{\substack{1\leq i<j\leq 4\\(i,j)\neq(2k-1,2k)}}(-1)^{i+j}((e_i-e_j)+(e_i+e_j))=0.\]
We partition the set $\{\tfrac12 (e_1\pm e_2\pm e_3\pm e_4)\}\subset R^+$ into four subsets, according to the number of $-$ signs appearing:
\begin{align*}
S_0&=\{\tfrac12(e_1+e_2+e_3+e_4)\},\\
S_1&=\{\tfrac12(e_1-e_2+e_3+e_4),\tfrac12(e_1+e_2-e_3+e_4),\tfrac12(e_1+e_2+e_3-e_4)\},\\
S_2&=\{\tfrac12(e_1+e_2-e_3-e_4),\tfrac12(e_1-e_2+e_3-e_4),\tfrac12(e_1-e_2-e_3+e_4)\},\\
S_3&=\{\tfrac12(e_1-e_2-e_3-e_4)\}.
\end{align*}
Then one quickly verifies that $\sum_{\alpha\in S_0\cup S_1\cup S_3}\alpha-\sum_{\alpha\in S_2}\alpha=e_1+e_2+e_3+e_4$. In total,
\begin{align*}
\sum_{\substack{1\leq i<j\leq 4\\(i,j)\neq(2k-1,2k)}}(-1)^{i+j}((e_i-e_j)+(e_i+e_j))-\sum_{i=1}^4e_i+\sum_{\alpha\in S_0\cup S_1\cup S_3}\alpha-\sum_{\alpha\in S_2}\alpha=0.
\end{align*}

\paragraph*{Case $\rmG_2$.}
An explicit signed combination is
\[\alpha_1-(\alpha_1+\alpha_2)-(3\alpha_1+\alpha_2)+(3\alpha_1+2\alpha_2)=0.\]
The remaining two roots $\alpha_2$ and $2\alpha_1+\alpha_2$ are strongly orthogonal.
\end{proof}

\begin{rem}
By comparing the tables in Theorems~\ref{thm:minimalsize} and~\ref{thm:maximalsize}, we see that if $R$ is of type $\rmA_n$ or $\rmB_n$, all well-balanced subsets have the same size. In particular the converse of Lemma~\ref{lem:maxbalanced} is true only for these root systems.
\end{rem}

%\clearpage

\end{document}